\newcommand{\Hm}[1]{\leavevmode{\marginpar{\tiny%
$\hbox to 0mm{\hspace*{-0.5mm}$\leftarrow$\hss}%
\vcenter{\vrule depth 0.1mm height 0.1mm width \the\marginparwidth}%
\hbox to 0mm{\hss$\rightarrow$\hspace*{-0.5mm}}$\\\relax\raggedright
#1}}}
\newcommand{\eat}[1]{}
\newcommand{\IR}{{\mathbb{R}}}
\newcommand{\R}{{\mathbb{R}}}
\newcommand{\N}{{\mathbb{N}}}
\newcommand{\eps}{{\varepsilon}}
\newcommand{\Gm}{{\Gamma}}
\newcommand{\eChar}{\begin{enumerate}[(i)]}
\newcommand{\eBr}{\begin{enumerate}[(1)]}
\newcommand{\supp}{\operatorname{supp}}
\newcommand{\ind}{k}
\theoremstyle{plain}
\newtheorem{lemma}{Lemma}[section]
\newtheorem{theorem}[lemma]{Theorem}
\newtheorem{proposition}[lemma]{Proposition}
\newtheorem{corollary}[lemma]{Corollary}
\theoremstyle{definition}
\newtheorem{example}[lemma]{Example}
\newtheorem{rem}[lemma]{Remark}
\newcommand{\Deg}{\operatorname{Deg}}
\numberwithin{equation}{section}
\title{Gradient estimates, Bakry-Emery Ricci curvature and  ellipticity for unbounded graph Laplacians}
\author{Matthias Keller, Florentin M\"unch}
\date{\today}
\begin{document}

\maketitle

\begin{abstract}
In this article, we prove  gradient estimates  under Bakry-Emery curvature bounds for unbounded graph Laplacians which satisfy an ellipticity assumption. As applications, we study completeness and finiteness of stochastically complete graphs under Bakry-Emery curvature bounds.
\end{abstract}
%\tableofcontents

\pagestyle{plain}

\section{Introduction}
It has been known for many decades that there is a deep relationship between Ricci curvature and the heat equation.
In particular, lower Ricci curvature bounds can be characterized via gradient estimates for the heat semigroup \cites{renesse2005transport, wang2011equivalent}.
Recently, there has been remarkable interest in discrete versions of Ricci curvature. Specifically, numerous definitions and approaches have been put forward. For example, 
Ricci curvature via optimal transport has been studied in
\cites{bauer2011ollivier,bauer2015generalized,bhattacharya2015exact,jost2014ollivier,joulin2009new,lin2011ricci,ollivier2007ricci,ollivier2009ricci,ollivier2010survey,munch2017ollivier,bourne2017ollivier,sandhu2015graph,sandhu2016ricci,tannenbaum2015ricci,paeng2012volume,loisel2014ricci,ni2015ricci,veysseire2012coarse,wang2014wireless,wang2016interference,BJL12,eldan2017transport,lin2013ricci,cushing2018ricci,cushing2018erratum,ollivier2012curved,rubleva2016ricci,cushing2018rigidity,kamtue2018combinatorial,cushing2018curvature}. Ricci curvature defined by the convexity of entropy has been studied in \cites{erbar2012ricci,erbar2018poincare,fathi2016entropic,MAAS20112250}.
Discrete Bakry Emery Ricci curvature based on Bochner's formula builds on \cites{bakry1985diffusions} and has been studied in \cites{chung2017curvature,chung2014harnack,cushing2016bakry,Gong2017,horn2014volume,hua2017stochastic,fathi2018curvature,kamtue2018combinatorial,klartag2015discrete,lin2015equivalent,liu2014eigenvalue,liu2015curvature,liu2016bakry,munch2014li,bauer2015li,lin2010ricci,munch2017remarks,hua2017stochastic,schmuckenschlager1998curvature,kempton2017relationships,liu2017rigidity,hua2017ricci,gao2016curvature,gao2016one,HL16,johnson2015discrete,liu2017distance,yamada2017curvature,cushing2018curvature}.
 Specifically, via Bakry Emery curvature, analogous gradient estimates as on manifolds have been established on graphs  under several additional assumptions \cites{hua2017stochastic, lin2015equivalent, Gong2017}.
However, most of this work is centered around graphs with bounded vertex degree. Only recently unbounded Laplacians were studied \cites{GLLY18,hua2017stochastic,Gong2017}. In this article, we prove a characterization of various gradient estimates for unbounded Laplacians by means of  Bakry-Emery Ricci curvature under an ellipticity assumption.

We say a locally finite graph $ b $ over a discrete measure space $ (X,m) $ (see Section~\ref{s:setup} for a definition) satisfies the ellipticity condition $ \mathrm{(EC)} $ if there is a constant $ C>0 $ such that
\begin{align*}\tag{$ \mathrm{EC} $}
b(x,y)\leq Cm(x)m(y),\qquad x,y\in X.
\end{align*}
%Furthermore, we denote the space of  all  positive, finitely supported functions by $ C_{c}^{+}(X)$.

The main theorem of this paper provides  a characterization of the Bakry-Emery curvature condition 
$CD(K,n)$  via various gradient estimates of the Laplacian $ \Delta $ and its semigroup $ P_{t} $ in terms of the carr\'e du champ operator $ \Gamma $ on the space of positive finitely supported functions $ C_{c}^{+}(X) $ (see Section~\ref{s:setup} for definitions).

\begin{theorem}\label{thm:main}
	Let $ b $ be a locally finite graph over $ (X,m) $ that satisfies the ellipticity condition $ \mathrm{(EC)} $.
	Let $ n>0 $ and $ K\in \R$. Then, the following statements are equivalent:
	\begin{align*}
	\mathrm{(i)}&&& CD(-K,n) \mbox{ holds},\\
\mathrm{(ii)}&&&	\Gamma(P_{t}f) \leq e^{2Kt}P_{t}(\Gamma(f))-\frac{2}{n}\int_{0}^{t}e^{2Ks}P_{s}\left(P_{t-s} \Delta f\right)^{2} ds, && f\in C_{c}^{+}(X),\, t\ge0,\\
\mathrm{(iii)}&&&\Gamma(P_{t}f) \leq e^{2Kt}P_{t}(\Gamma(f))-\frac{e^{2Kt}-1}{Kn}\left(P_{t} \Delta f\right)^{2}, && f\in C_{c}^{+}(X),\, t\ge0,\\
\mathrm{(iv)}&&&P_{t}f^{2}-(P_{t}f)^{2}\leq \frac{e^{2Kt}-1}{K}P_{t}(\Gamma(f))-\frac{e^{2Kt}-1-2Kt}{K^{2}n}\left(P_{t} \Delta f\right)^{2}, && f\in C_{c}^{+}(X),\, t\ge0,\\
\mathrm{(v)}&&&P_{t}f^{2}-(P_{t}f)^{2}\geq \frac{1-e^{-2Kt}}{K}\Gamma(P_{t}f)+\frac{e^{-2Kt}-1+2Kt}{K^{2}n}\left(P_{t} \Delta f\right)^{2}, && f\in C_{c}^{+}(X),\, t\ge0,
	\end{align*}
	where for $ K=0 $ we set $ (1-e^{\pm 2Kt})/K=\mp 2t $ and   $ (e^{\pm 2Kt}-1\mp 2Kt)/K^2=2t ^{2} $.
\end{theorem}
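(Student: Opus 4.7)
The plan is to follow the classical Bakry--Emery scheme adapted to the discrete unbounded setting. The strategy is to establish the core equivalence (i) $\Leftrightarrow$ (ii) by evolving a semigroup-valued functional under the heat flow; derive (iii), (iv), (v) from (ii) using the Markov--Jensen inequality $P_s(g^2) \geq (P_s g)^2$ together with the semigroup variance identity; and close the loop by a second-order Taylor expansion at $t = 0$, which recovers (i) from each of (iii), (iv), (v).

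For (i) $\Rightarrow$ (ii), fix $f \in C_{c}^{+}(X)$ and $t > 0$, and consider the auxiliary function $\phi(s) := e^{2Ks} P_s[\Gamma(P_{t-s}f)]$ on $[0,t]$. Using $\partial_s P_s g = \Delta P_s g = P_s \Delta g$, the chain rule $\partial_s \Gamma(P_{t-s}f) = -2\Gamma(P_{t-s}f, \Delta P_{t-s}f)$, and the definition $2\Gamma_2(g) = \Delta\Gamma(g) - 2\Gamma(g, \Delta g)$, one computes
$$\phi'(s) = 2 e^{2Ks} P_s\bigl[K\Gamma(P_{t-s}f) + \Gamma_2(P_{t-s}f)\bigr].$$
Applying $CD(-K,n)$ pointwise to $P_{t-s}f$ gives $\phi'(s) \geq \frac{2}{n}e^{2Ks}P_s(P_{t-s}\Delta f)^2$, and integrating from $0$ to $t$ (using $\phi(0) = \Gamma(P_t f)$ and $\phi(t) = e^{2Kt}P_t\Gamma(f)$) produces (ii). For (ii) $\Rightarrow$ (iii), apply Markov--Jensen to $g = P_{t-s}\Delta f$ to obtain $P_s(P_{t-s}\Delta f)^2 \geq (P_t\Delta f)^2$, and compute $\int_0^t e^{2Ks}\,ds = (e^{2Kt}-1)/(2K)$ to match the coefficient.

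For (ii) $\Rightarrow$ (iv) and (ii) $\Rightarrow$ (v), I will use the variance identity
$$P_t f^2 - (P_t f)^2 = 2\int_0^t P_s\Gamma(P_{t-s}f)\,ds,$$
which follows from differentiating $P_s[(P_{t-s}f)^2]$ in $s$. For (iv), substitute the upper bound from (iii) for $\Gamma(P_{t-s}f)$, apply $P_s$, integrate, and invoke Markov--Jensen on $P_s(P_{t-s}\Delta f)^2$ once more; the sign of $e^{2K(t-s)}-1$ when $K<0$ requires careful tracking to keep the inequalities correctly oriented. For (v), applying (ii) to the function $P_{t-s}f$ with time $s$ produces the pointwise lower bound $P_s\Gamma(P_{t-s}f) \geq e^{-2Ks}\Gamma(P_t f) + \frac{1-e^{-2Ks}}{Kn}(P_t\Delta f)^2$, which upon integration yields (v). Each converse implication (iii) $\Rightarrow$ (i), (iv) $\Rightarrow$ (i), (v) $\Rightarrow$ (i) is obtained by expanding the inequality in $t$ around $t=0$: the $O(1)$ and $O(t)$ terms cancel identically, and the $O(t^2)$ coefficient, after substituting $\Delta\Gamma(f) = 2\Gamma_2(f) + 2\Gamma(f,\Delta f)$, reduces to $\Gamma_2(f) \geq -K\Gamma(f) + \frac{1}{n}(\Delta f)^2$, i.e., $CD(-K,n)$.

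The principal obstacle is analytic rather than algebraic: since $\Delta$ is unbounded, one must rigorously justify pointwise differentiability of $s \mapsto P_s g$, the identities $\partial_s P_s g = \Delta P_s g = P_s \Delta g$, the absolute convergence of the sums implicit in $P_s g$ and $\Gamma_2(g)$ for functions (such as $P_{t-s}\Delta f$ and $P_{t-s}f$) that a priori need not lie in any obvious $\ell^p$ space, and the extension of the gradient estimates from $f \in C_{c}^{+}(X)$ to arguments such as $P_{t-s}f$ appearing in the derivations above. This is precisely the role of the ellipticity hypothesis $\mathrm{(EC)}$: combined with $f \in C_{c}^{+}(X)$, it ensures that $P_t f$ and its iterated Laplacians remain in a suitable function space (e.g.\ $\ell^\infty(X,m)$) on which $P_s$ acts boundedly, making every differentiation, interchange of limits, and infinite sum in the scheme rigorous, and permitting the required approximation and extension arguments.
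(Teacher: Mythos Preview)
Your approach is essentially the paper's: introduce $\phi(s)=e^{2Ks}P_s\Gamma(P_{t-s}f)$, use $CD(-K,n)$ to bound $\phi'(s)$ from below, integrate, and recover (i) from the gradient estimates by Taylor expansion at $t=0$. Two points deserve correction, however.

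First, your derivation of (v) applies (ii) to the function $P_{t-s}f$, which is not in $C_c^+(X)$; the extension you invoke in the last paragraph is not what the paper does and would require separate justification. The paper avoids this entirely: having established the differential inequality $\phi'(r)\ge \tfrac{2}{n}e^{2Kr}(P_t\Delta f)^2$ directly from $CD(-K,n)$, it simply integrates over $[0,s]$ (for the lower bound on $H(s)=\phi(s)$) and over $[s,t]$ (for the upper bound), then feeds these into $G'(s)=2e^{-2Ks}H(s)$ where $G(s)=P_s(P_{t-s}f)^2$, and integrates again. This gives (iv) and (v) directly from (i), never needing (ii) for a non-compactly-supported argument. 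Also note that (iii)$\Rightarrow$(i) only needs the first-order term in $t$, not the second; it is (iv) and (v) that require the $O(t^2)$ expansion.

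Second, your description of the role of $\mathrm{(EC)}$ is off. The differentiability of $s\mapsto P_s\Gamma_k(P_{t-s}f)$, the interchange of limits, and the uniform $\ell^1$-summability of $\Gamma_k(P_tf)$ and $\partial_t\Gamma_k(P_tf)$ are proved in the paper (Lemma~4.2 and Proposition~4.1) \emph{without} any use of $\mathrm{(EC)}$; they follow from $f\in C_c(X)$ and basic spectral calculus for the Friedrichs extension. The ellipticity condition enters at exactly one place: it makes the adjacency operator bounded from $\ell^1(X,m)$ to $\ell^\infty(X)$, which yields a Green's formula $\langle f,\Delta g\rangle_{1,\infty}=\langle \Delta f,g\rangle_{\mathrm{abs}}$ for $f,g\in\ell^1$ with $\Delta g\in\ell^\infty$. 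This is precisely what allows one to pass from $\langle \Gamma_k(P_{t-s}f),\Delta P_s 1_x\rangle$ to $\langle \Delta\Gamma_k(P_{t-s}f),P_s 1_x\rangle$ and thereby obtain $\partial_s P_s\Gamma_k(P_{t-s}f)=2P_s\Gamma_{k+1}(P_{t-s}f)$.
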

Let us discuss the result in view of the literature.

For bounded graph Laplacians, such a theorem was proven in
\cite[Theorem~3.1]{lin2015equivalent}.
Moreover, in \cite{hua2017stochastic} the equivalence of (i) $ \Longleftrightarrow $ (ii) for $ n=\infty $ was proven under the assumption of completeness of the graph (see Section~\ref{s:appl}) and non-degeneracy of the measure, i.e., $ m\ge C $ for some constant $ C>0 $. Later in \cite{Gong2017},    (i) $ \Longleftrightarrow $ (iv)  was proven for $ n=\infty $  and  (i) $ \Longleftrightarrow $ (ii) was proven for arbitrary positive $ n$  under the same assumptions, i.e., completeness and non-degenerate measure.

From a logical point of view, our condition (EC) is independent from the conditions in the literature. Specifically, (EC) is independent  of boundedness as well as of non-degeneracy of the measure and completeness. Hence, our result stands  skew to the results discussed above. %However, we stress that (EC) clearly allow for a large class of unbounded graphs.

  From a conceptual  point of view, our assumption (EC) is  purely local and geometrical. This stands in contrast to the non-local completeness  assumption which is rather analytical and has no immediate geometric meaning.

  Finally, from a practical point of view, (EC) is explicit and allows for unbounded graphs.  On the other hand,  completeness is often hard to check and boundedness is rather restrictive.

  A major advantage of our approach is that graphs with standard weights and the counting measure as well as with the normalizing measure satisfy (EC).

The paper is structured as follows. In the next section we introduce the basic notions. In Section~\ref{s:green} we prove a Green's formula which is a consequence of the ellipticity condition (EC). In Section~\ref{s:product} we compute the derivative of the function $ s\mapsto P_{s}\Gamma(P_{t-s}f)(x) $ with respect to $ s
 $
under very mild conditions. We put these two pieces together in Section~\ref{s:proof} to prove the theorem above. We next study applications of the main theorem for stochastically complete graphs.
Finally, we give examples of graphs for which our method gives 
gradient estimates, but all previous approaches fail to do so.

\section{Setup and notations}\label{s:setup}

Let a  discrete countable space $ X $ be given.
A strictly positive function $ m $ on $ X $ extends to a measure of full support via additivity, i.e., $ m(A)=\sum_{x\in A}m(x) $, $ A\subseteq X $, and  $ (X,m) $ is referred to as a \emph{discrete measure space}. 
A \emph{locally finite graph}  over a discrete measure space $ (X,m) $ is a symmetric function $ b:X\times X\to[0,\infty) $ with zero diagonal such that the sets $\{y\in X\mid b(x,y)>0\}  $ are finite for all $ x\in X $. The graph  $ b $ is called \emph{connected} if for every $x,y\in X$ there are $x_1,\ldots, x_n\in X$ such that ${b(x,x_1), b(x_1,x_2),\ldots, b(x_n,y)>0}$.

The space of real valued functions on $ X $ is denoted by $ C(X) $ and its subspace of compactly, and, thus, finitely supported functions is denoted by $ C_{c}(X) $.
We denote by $ \ell^{p}(X,m) $  the standard spaces of $ p $-summable real valued functions with respect to $ m $ and $ \ell^{\infty}(X) $ the space of bounded real valued functions.  Their norms are denoted by $ \|\cdot\|_{p} $, $ p\in[1,\infty] $. For $ p,q\in[1,\infty] $ such that $ 1/p+1/q=1 $, we denote the dual pairing by $ \langle\cdot,\cdot\rangle_{p,q} $. Furthermore, for $ f,g\in C(X) $, we write
\begin{align*}
\langle f,g\rangle_{\mathrm{abs}} :=\sum_{x\in X}f(x) g(x)m(x),
\end{align*}
whenever the right hand side converges absolutely.

On $ C(X) $, we define the \emph{Laplacian} $ \Delta $ as
\begin{align*}
\Delta f(x)=\frac{1}{m(x)}\sum_{y\in X}b(x,y)(f(y)-f(x)).
\end{align*}
With the Laplacian at hand, we can introduce the $ \Gamma $-calculus (see e.g. \cite{bakry1985diffusions}) via setting 
$ \Gm_{0}(f,g)=f\cdot g $ and iteratively
\[
2\Gamma_{\ind+1} (f,g) := \Delta\Gamma_\ind (f,g) - \Gamma_\ind(f,\Delta g) -  \Gamma_\ind(\Delta f,g)
\]
for $ f,g\in C(X) $ and $ k\in\N_0 $.
For convenience, we write $\Gamma_k (f) := \Gamma_k(f,f)$ and $\Gamma := \Gamma_1$ which is often referred to as the \emph{carr\'e du champ operator}. Obviously, $ \Gm_{k} $ is bilinear.

We say a graph satisfies the \emph{curvature dimension condition} $CD(K,n)$ with curvature $K \in \IR$ and dimension $n >0$ if, for all $f \in C(X)$,
\[
\Gamma_2(f) \geq \frac 1 n (\Delta f)^2 + K \Gamma(f).
\]

In order to introduce the heat semigroup, we need  to consider a self-adjoint restriction of $ \Delta $. 
Due to the local finiteness of the graph, the restriction of $ -\Delta $ to $ C_{c}(X) $ is  a symmetric positive operator on $\ell^{2}(X,m)  $. Thus,  $ -\Delta\vert_{ C_{c}(X)}  $ has a Friedrichs extension $ L $ with domain $ D(L) $, see e.g. \cite{weidmann2012linear}. 
By the virtue of a Green formula, it is not hard to see (confer \cite{keller2012dirichlet}) that the domain of the Friedrichs extension is given by
\[
D(L)=\{f \in D(Q) : - \langle \Delta f,g \rangle_{2,2} = Q(f,g) \mbox{ for all } g \in D(Q) 
 \}
\]
with
$Q(f,g)=\langle \Gamma(f,g),1\rangle_{1,\infty}$
for all $f,g \in C(X)$ with $\Gamma(f)\in \ell_1(X,m)$ and $\Gamma(g) \in \ell_1(X,m)$, and
$
D(Q) = \overline{C_c(X)}^{\|\cdot\|_Q}
$
with
\[
\|f\|_Q = Q(f,f) + \|f\|_2^2.
\]

One can observe easily that $ L $ is a restriction of $ -\Delta $, i.e.,
\begin{align*}
L=-\Delta\quad \mbox{on }D(L).
\end{align*}
By the spectral theorem we define powers  $ L^{k} $, $ k\ge0 $, of $ L $. By local finiteness of the graph, the compactly supported functions $ C_{c}(X) $ are included in $ D(L^{k}) $ for all $ k\ge0 $. Furthermore, we define the heat semigroup
\begin{align*}
P_{t}:=e^{-tL},\qquad t\ge0,
\end{align*}
which extends to a bounded positive contraction semigroup on all $ \ell^{p}(X,m) $ spaces, $ p\in [1,\infty] $ and is strongly continuous for $ p\in[1,\infty) $. Moreover, for  $ f\in \ell^{2}(X,m) $, the function
\begin{align*}
u_{t}:=P_{t}f
\end{align*}
is the unique solution of the heat equation
\begin{align*}
\Delta u_{t}=\partial_{t}u_{t},\qquad u_{0}=f.
\end{align*}
 in $ D(L) $.
Corresponding statements hold true for $ f \in\ell^{p}(X,m)$, $ p\in[1,\infty) $, with uniqueness in the domain of the generator of $ P_{t} $ in $ \ell^{p} (X,m)$.
Moreover, the heat semigroup on non-negative functions in $\ell^\infty(X)$ can also be introduced as the smallest non-negative solution to the heat equation, see \cite{keller2012dirichlet, wojciechowski2008heat}.

\section{Green's formula}\label{s:green}

It can be seen that the quadratic form of the restriction of $ -\Delta $ is given by
\begin{align*}
\|\Gamma(f)\|_{1}=\| L^{1/2}f\|_{2}^{2}, \qquad f\in D(L^{1/2}),
\end{align*}
and by definition of $ L $ and $ -\Delta=L $ we have
\begin{align*}
\langle{ f, \Delta g}\rangle_{2,2}= -\sum_{x\in X}\Gamma(f,g)(x)m(x)=\langle{\Delta f,g}\rangle_{2,2}\qquad f,g\in D(L).
\end{align*}
Below, we will extend this equality known as \emph{Green's formula} to a larger class of functions under the condition (EC).% which asserts the existence of $ C\ge0  $ such that $ b(x,y)\leq C m(x)m(y) $ for all $ x,y\in X $.
\begin{proposition}[Green's formula]\label{p:GF} Assume the  graph $ b $ over $(X,m)  $ satisfies (EC). Then, for all  $ f,g\in \ell^{1}(X,m) $ with $ \Delta g\in \ell^{\infty}(X) $ we have
	\begin{align*}
	\langle f,\Delta g\rangle_{1,\infty} =	\langle \Delta f, g\rangle_{\mathrm{abs}}.
	\end{align*}	
\end{proposition}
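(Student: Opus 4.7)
The plan is to prove the identity by Fubini, swapping the order of summation in the double sum expansion of $\langle f, \Delta g\rangle_{1,\infty}$ and using the symmetry $b(x,y)=b(y,x)$. The main technical work is to verify absolute convergence of the relevant double sums under (EC).

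First I would check that both $\Delta f$ and $\Delta g$ are defined pointwise on all of $X$, not just on the support of $f$ or $g$: for any $h\in\ell^1(X,m)$, the ellipticity condition gives
\[
\sum_{y\in X} b(x,y)\,|h(y)| \le C\,m(x)\sum_{y\in X} m(y)\,|h(y)| = C\,m(x)\,\|h\|_1 < \infty,
\]
so the defining series for $\Delta h(x)$ converges absolutely.

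The key observation, and the step I expect to be the main obstacle, is a uniform bound on the quantity $|g(x)|\deg(x)$ where $\deg(x):=\frac{1}{m(x)}\sum_y b(x,y)$. This quantity controls the "diagonal" piece of the double sums and need not be bounded in general (if $m(X)=\infty$ then $\deg$ can be unbounded even under (EC)). Writing
\[
g(x)\,\deg(x) = \frac{1}{m(x)}\sum_{y\in X} b(x,y)\,g(y) - \Delta g(x),
\]
estimating the first term by $C\|g\|_1$ via (EC) as above, and using $\Delta g\in\ell^\infty(X)$, I obtain the uniform bound $|g(x)|\deg(x)\le C\|g\|_1+\|\Delta g\|_\infty$.

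With this in hand, I would verify absolute convergence of the double sum
\[
\sum_{x,y} b(x,y)\,|f(x)|\,|g(y)-g(x)|,
\]
by splitting along the triangle inequality: the off-diagonal part $\sum b(x,y)|f(x)||g(y)|$ is bounded by $C\|f\|_1\|g\|_1$ directly from (EC), while the diagonal part equals $\sum_x |f(x)||g(x)|\,m(x)\deg(x) \le (C\|g\|_1+\|\Delta g\|_\infty)\,\|f\|_1$ by the key bound. An analogous splitting shows $\langle \Delta f, g\rangle_{\mathrm{abs}}$ is well defined (absolutely convergent). Fubini then permits me to write
\[
\langle f,\Delta g\rangle_{1,\infty} = \sum_{x,y} b(x,y)\,f(x)\,g(y) - \sum_{x,y} b(x,y)\,f(x)\,g(x),
\]
and relabelling $x\leftrightarrow y$ in the first sum via $b(x,y)=b(y,x)$ turns it into $\sum_{x,y} b(x,y)\,f(y)\,g(x)$. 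Regrouping and performing the inner $y$-sum (which is finite by local finiteness) yields $\sum_x g(x)\,m(x)\,\Delta f(x) = \langle \Delta f, g\rangle_{\mathrm{abs}}$, completing the proof.
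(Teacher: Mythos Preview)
Your proof is correct and follows essentially the same route as the paper: the paper introduces the adjacency operator $A=\Delta+\operatorname{Deg}$, shows (EC) is equivalent to $A:\ell^1(X,m)\to\ell^\infty(X)$ being bounded, and then splits $\Delta g=Ag-\operatorname{Deg}\,g$ --- which amounts precisely to your ``key observation'' that $g\cdot\operatorname{Deg}=Ag-\Delta g\in\ell^\infty$. The only cosmetic difference is that the paper first reduces to nonnegative $f,g$ and applies Tonelli, whereas you verify absolute convergence of the double sum directly and invoke Fubini.
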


To prove the statement, consider the weighted vertex degree $ \Deg:X\to[0,\infty) $
\begin{align*}
\Deg(x)=\frac{1}{m(x)}\sum_{y\in X}b(x,y),\qquad x\in X,
\end{align*}
and we denote the corresponding multiplication operator on $ C(X) $ by slight abuse of notation also by $ \Deg $. This way the \emph{adjacency matrix} $ A:=\Delta+\Deg $ acts on $ C(X) $ as
\begin{align*}
Af(x)=\frac{1}{m(x)}\sum_{y\in X}b(x,y)f(y),\qquad x\in X.
\end{align*}
We can characterize the ellipticity condition (EC) by boundedness of the adjacency matrix as an operator from $ \ell^{1}(X,m) $ to $ \ell^{\infty}(X) $.

\begin{lemma}[Characterizing (EC) via $ A $] Let $ b $ be a graph over $ (X,m) $. The following statements are equivalent:
	\begin{itemize}
		\item[(i)] $ \mathrm{(EC)} $ holds, i.e. there is $ C\ge0  $ such that $ b(x,y)\leq C m(x)m(y) $ for all $ x,y\in X $.
	\item[(ii)]  The operator $ A $ is  bounded  from $ \ell^{1}(X,m) $ to $ \ell^{\infty}(X) $.
	\end{itemize}
\end{lemma}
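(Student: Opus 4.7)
The plan is to verify both implications by elementary direct calculation, using the explicit kernel form of $A$ and testing it on point masses.

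For the implication (i)$\Rightarrow$(ii), I would substitute the pointwise bound $b(x,y)\leq Cm(x)m(y)$ directly into the formula for $A$. Since
\begin{align*}
|Af(x)|\leq \frac{1}{m(x)}\sum_{y\in X}b(x,y)|f(y)|\leq \frac{1}{m(x)}\sum_{y\in X}Cm(x)m(y)|f(y)|=C\|f\|_{1},
\end{align*}
taking the supremum over $x$ gives $\|Af\|_{\infty}\leq C\|f\|_{1}$, so $A\colon \ell^{1}(X,m)\to\ell^{\infty}(X)$ is bounded with norm at most $C$.

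For the converse (ii)$\Rightarrow$(i), I would test $A$ on point masses. Fix $y\in X$ and consider the indicator $\mathbf{1}_{\{y\}}\in\ell^{1}(X,m)$, which has $\ell^{1}$-norm equal to $m(y)$. Its image is $A\mathbf{1}_{\{y\}}(x)=b(x,y)/m(x)$, so the boundedness hypothesis with operator norm $C:=\|A\|_{\ell^{1}\to\ell^{\infty}}$ yields
\begin{align*}
\frac{b(x,y)}{m(x)}\leq \|A\mathbf{1}_{\{y\}}\|_{\infty}\leq C\,\|\mathbf{1}_{\{y\}}\|_{1}=Cm(y)
\end{align*}
for every $x\in X$, which is exactly (EC) (and in fact pins down the best constant as the operator norm).

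There is no real obstacle here; both directions are one-line estimates once one writes out the kernel of $A$. The only point worth flagging is that the pair $(\ell^{1},\ell^{\infty})$ is naturally adapted to this type of kernel bound: an integral operator with kernel $k(x,y)$ on a measure space has $\|T\|_{L^{1}\to L^{\infty}}=\operatorname{ess\,sup}_{x,y}|k(x,y)|$, and the present statement is the discrete avatar of this fact with kernel $k(x,y)=b(x,y)/(m(x)m(y))$ on $(X,m)\times (X,m)$.
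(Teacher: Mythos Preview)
Your proof is correct and follows essentially the same approach as the paper: both directions use the same pointwise estimate for (i)$\Rightarrow$(ii) and test $A$ on indicator functions $1_{y}$ for (ii)$\Rightarrow$(i), arriving at the optimal constant $\|A\|_{1,\infty}$. Your closing remark about the $L^{1}\to L^{\infty}$ kernel norm is a nice conceptual gloss not present in the paper, but the core argument is identical.
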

\begin{proof}
	(i) $ \Longrightarrow $ (ii): The boundedness of $ A $ from $ \ell^{1}(X,m) $ to $ \ell^{\infty}(X) $ follows from the estimate
	\begin{align*}
	|Af(x)| \leq \frac 1 {m(x)} \sum_{y\in X} b(x,y) |f(y)| \leq C \sum_{y\in X} m(y)|f(y)| = C\|f\|_1,\qquad x\in X.
	\end{align*}
	Hence, $ \|A\|_{1 \to \infty}\leq C $ with $ \|\cdot\|_{1 \to \infty} $ being the operator norm from $ \ell^{1}(X,m) $ to $ \ell^{\infty}(X) $.
	
	(ii) $ \Longrightarrow $ (i):	The boundedness of $ A $ from $ \ell^{1}(X,m) $ to $ \ell^{\infty}(X) $ applied to the characteristic function $ 1_{y} $ of the vertex $ y $ gives
		\begin{align*}
		\frac{b(x,y)}{m(x)}=A1_{y}(x) \leq \|A 1_{y}\|_\infty \leq \|A\|_{1 \to \infty}\|1_{y}\|_1 = \|A\|_{1 \to \infty} m(y)
		\end{align*}
		proving $ b(x,y)\leq \|A\|_{1 \to \infty}m(x)m(y) $ for all $ x,y\in X $.
\end{proof}

\begin{proof}[Proof of Proposition~\ref{p:GF}] By decomposition into positive and negative part we can assume that $ f $ and $ g $ are positive.
	
	By the lemma above we have $ Ag \in \ell^{\infty}(X) $ and, therefore,
	\begin{align*}
	\langle f,\Delta g\rangle_{1,\infty} =	\langle f,A g\rangle_{1,\infty} -	\langle f,\Deg g\rangle_{1,\infty}.	
	\end{align*}
	Tonelli's theorem gives $ \langle f,A g\rangle_{1,\infty}=\langle A f, g\rangle_{\infty,1} $ for the positive functions $ f,g $. Moreover, since $ \Deg $ is a multiplication operator, we have  $ \langle f,\Deg g\rangle_{1,\infty}=\langle \Deg f, g\rangle_{\mathrm{abs}} $. Hence,
	\begin{align*}
	\langle f,\Delta g\rangle_{1,\infty} =	\langle Af, g\rangle_{\infty,1} -	\langle \Deg f, g\rangle_{\mathrm{abs}} =\langle \Delta f, g\rangle_{\mathrm{abs}}.	
	\end{align*}
	This finishes the proof.
\end{proof}

\section{A product rule}\label{s:product}

To prove the main theorem, Theorem~\ref{thm:main}, we need to compute the derivative of the function
\begin{align*}
s\mapsto \langle \Gamma_{\ind}(P_{t-s} f), P_{s} 1_x \rangle_{1,\infty}
\end{align*}
for fixed $ t\ge0 $ and $ x\in X $ where $1_x \in C(X)$ with $1_x(y)=1$ if $x=y$ and $1_x(y)=0$ otherwise. This will be achieved by the next proposition and the Green's formula above.

To this end, recall that $ P_{t}=e^{-tL} $, $ t\ge0 $, is a bounded positive contraction semigroup on $ \ell^{p}(X,m) $, $ p\in[1,\infty], $ which is strongly continuous for $ p\in[1,\infty) $ (where $ L $ is  the Friedrichs extension of the restriction of $ -\Delta $ to $ C_{c}(X) $). Moreover,  $ P_{t} $ maps $ \ell^{2}(X,m) $ to $D(L)\subseteq D(L^{1/2}) $, and therefore, $ \Gamma_{k}(P_{t}f)\in \ell^{1}(X,m) $ for all $ f\in \ell^{2}(X,m) $, $ k=0,1 $ and  $ t>0 $.
% Furthermore, since $ C_{c}(X)\subseteq D(L^{1/2}) $ we have also $ \Gamma_{k}(P_{t}f)\in \ell^{1}(X,m) $ for all $ f\in \ell^{2}(X,m) $,  $ k=0,1 $ and $ t\ge 0 $.

The following proposition is found in \cites{lin2015equivalent,Gong2017} under  stronger assumptions.

	\begin{proposition}\label{thm:HuaDerivative}
		Let $b$  be a locally finite graph over $(X,m)$ and $ k=0,1 $. Then, for all $ f\in C_{c}(X) $,  $0< s < t $ and $x\in X$, we have  $ \partial_{s}\Gamma_{k}(P_{t-s}f)\in \ell^{1}(X,m)  $, $ \Delta P_{s}1_{x}\in \ell^{\infty}(X) $ and
		\begin{align*}
\partial_s \langle \Gamma_{\ind}(P_{t-s}f), P_{s} 1_x \rangle_{1,\infty} = \langle\partial_s \Gamma_{\ind}(P_{t-s} f), P_{s} 1_x \rangle_{1,\infty}+ \langle \Gamma_{\ind}(P_{t-s} f), \Delta P_{s} 1_x \rangle_{1,\infty}.
		\end{align*}
	\end{proposition}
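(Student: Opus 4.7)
The plan is to reduce the differentiation of $\phi(s) := \langle \Gamma_k(P_{t-s}f), P_s 1_x\rangle_{1,\infty}$ to pointwise differentiation at each $y \in X$---which succeeds because local finiteness makes $\Gamma_k(g)(y)$ a smooth function of only finitely many values $g(z)$, and the heat equation holds pointwise---and then to justify the exchange of limit and summation via the $\ell^1$--$\ell^\infty$ duality.

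For the integrability claims, I first observe that local finiteness gives $\Delta 1_x \in C_c(X) \subseteq \ell^\infty(X) \cap \ell^2(X,m)$ and $1_x \in D(L)$. Then $\Delta P_s 1_x = P_s \Delta 1_x$ by commutativity on $D(L)$, and since $P_s$ is a contraction on $\ell^\infty(X)$, $\|\Delta P_s 1_x\|_\infty \leq \|\Delta 1_x\|_\infty$. For $\partial_s \Gamma_k(P_{t-s}f)$, the pointwise Leibniz rule applied to the finite sum defining $\Gamma_k$, combined with the heat equation, yields $\partial_s \Gamma_k(P_{t-s}f) = -2\Gamma_k(P_{t-s}f, \Delta P_{t-s}f)$. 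Cauchy--Schwarz for the positive bilinear form $\Gamma_k$ then gives $\|\partial_s \Gamma_k(P_{t-s}f)\|_1 \leq 2\|\Gamma_k(P_{t-s}f)\|_1^{1/2}\|\Gamma_k(\Delta P_{t-s}f)\|_1^{1/2}$, which is finite because analyticity of the self-adjoint $\ell^2$-semigroup places $P_{t-s}f$ in every $D(L^j)$ (using $s<t$), and $\|\Gamma(g)\|_1=\|L^{1/2}g\|_2^2$.

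For the product rule itself, I split the difference quotient as
\[
\frac{\phi(s+\epsilon) - \phi(s)}{\epsilon} = \frac{T_1(\epsilon)}{\epsilon} + \frac{T_2(\epsilon)}{\epsilon},
\]
with $T_1(\epsilon) = \langle \Gamma_k(P_{t-s-\epsilon}f) - \Gamma_k(P_{t-s}f), P_{s+\epsilon}1_x\rangle_{1,\infty}$ and $T_2(\epsilon) = \langle \Gamma_k(P_{t-s}f), P_{s+\epsilon}1_x - P_s 1_x\rangle_{1,\infty}$. For $T_2(\epsilon)/\epsilon$ the pointwise FTC yields $P_{s+\epsilon}1_x(y) - P_s 1_x(y) = \int_s^{s+\epsilon}\Delta P_u 1_x(y)\,du$; Fubini is legal because $|\Gamma_k(P_{t-s}f)(y) \Delta P_u 1_x(y)| \leq \|\Delta 1_x\|_\infty \Gamma_k(P_{t-s}f)(y)$ is $m$-summable uniformly in $u$, and this dominator together with continuity of $u \mapsto P_u \Delta 1_x(y)$ yields the limit $\langle \Gamma_k(P_{t-s}f), \Delta P_s 1_x\rangle_{1,\infty}$. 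For $T_1(\epsilon)/\epsilon$ the same FTC gives $\Gamma_k(P_{t-s-\epsilon}f)(y) - \Gamma_k(P_{t-s}f)(y) = \int_s^{s+\epsilon}\partial_u \Gamma_k(P_{t-u}f)(y)\,du$; here Fubini requires a uniform $\ell^1$-bound on $\partial_u \Gamma_k(P_{t-u}f)$ for $u$ in a compact neighborhood of $s$, which follows from the Cauchy--Schwarz estimate above together with continuity of the norms $u \mapsto \|L^j P_{t-u}f\|_2$ furnished by analyticity. Replacing $P_{s+\epsilon}1_x$ by $P_s 1_x$ in the resulting Cesaro integral costs only $O(\epsilon)$ via the bound $\|P_{s+\epsilon}1_x - P_s 1_x\|_\infty \leq |\epsilon|\|\Delta 1_x\|_\infty$ obtained from the pointwise FTC, and continuity of $u \mapsto \langle \partial_u \Gamma_k(P_{t-u}f), P_s 1_x\rangle_{1,\infty}$ then identifies the limit as $\langle \partial_s \Gamma_k(P_{t-s}f), P_s 1_x\rangle_{1,\infty}$.

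The main obstacle is the uniform $\ell^1$-control of $\partial_u \Gamma_k(P_{t-u}f)$ as $u$ varies in a compact neighborhood of $s$, together with the matching continuity of the pairings in $u$; both rest on analyticity of the self-adjoint $\ell^2$-semigroup and the spectral identity $\|\Gamma(g)\|_1 = \|L^{1/2}g\|_2^2$, while the pointwise differentiation that feeds the argument is trivial thanks to local finiteness.
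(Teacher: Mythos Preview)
Your argument is correct and follows the same overall plan as the paper---compute the pointwise $s$-derivative of each summand via Leibniz and then justify exchanging the derivative with the sum over $X$---but the technical execution of the exchange differs. The paper first proves a preparatory lemma furnishing a \emph{pointwise} $\ell^1$ majorant: using the fundamental theorem of calculus together with the contractivity estimate $\|\Gamma_k(P_r\Delta^l f)\|_1=\|L^{k/2+l}P_r f\|_2^2\le\|L^{k/2+l}f\|_2^2$, it shows that $\max_{r\in[0,t]}\Gamma_k(P_r f)$ and $\max_{r\in[0,t]}|\partial_r\Gamma_k(P_r f)|$ lie in $\ell^1(X,m)$, and then the proposition follows immediately from the standard criterion for differentiating under a sum. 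You instead avoid the pointwise majorant and work only with a uniform bound on $\|\partial_u\Gamma_k(P_{t-u}f)\|_1$: you split the difference quotient by hand, represent each piece via the FTC, apply Fubini (for which the uniform norm bound suffices), and pass to the limit using $\ell^1$-continuity of $u\mapsto\partial_u\Gamma_k(P_{t-u}f)$. Both routes are valid; the paper's majorant yields a cleaner one-line conclusion, while your approach trades the majorant lemma for a more hands-on limit argument. One minor remark: your repeated appeal to analyticity is unnecessary here, since $f\in C_c(X)$ already lies in every $D(L^j)$ by local finiteness, so contractivity alone gives $\|L^jP_{t-u}f\|_2\le\|L^jf\|_2$ uniformly for $u\in[0,t]$, and ordinary strong continuity of the semigroup (together with bilinearity of $\Gamma_k$ and the spectral identity) is what actually delivers the $\ell^1$-continuity you need at the end.
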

While formally the statement in the proposition above  is only the product rule, one has to ensure that one is allowed to interchange the corresponding limits. To this end, the following lemma is vital.
	
	\begin{lemma}\label{l:UniformSummableGamma} Let $b$  be a locally finite graph over $(X,m)$ and $ k=0,1 $. Then, for all $ f\in C_{c}(X) $,  $0\leq s\le t $ and $x\in X$, we have
		\begin{align*}
		\partial_s \Gamma_\ind(P_{t-s} f) = -2\Gamma_\ind(P_{t-s} f,\Delta P_{t-s} f)
		\end{align*}
		and for all $ T\ge0 $,
		\begin{align*}
 \max_{t\in [0,T]} \Gamma_\ind(P_t f),\,		  \max_{t\in [0,T]} |\partial_t \Gamma(P_t f)| \in \ell^1(X,m).
		\end{align*}
	\end{lemma}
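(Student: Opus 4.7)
The plan is to treat the derivative identity and the summability claims separately. The identity is essentially a pointwise chain-rule computation. Because the graph is locally finite, for each fixed $x\in X$ the sum defining $\Gamma_\ind(P_{t-s}f)(x)$ runs over finitely many neighbors of $x$, so I may differentiate in $s$ term by term. Since $f \in C_c(X) \subset D(L^j)$ for every $j\ge 0$, the map $r\mapsto P_r f(y)$ is $C^1$ with $\partial_r P_r f = \Delta P_r f$ pointwise at each vertex. For $\ind=0$ the asserted formula is just the product rule applied to $(P_{t-s}f)^2$; for $\ind=1$ one differentiates the finite sum $\frac{1}{2m(x)}\sum_y b(x,y)(P_{t-s}f(y)-P_{t-s}f(x))^2$ and substitutes $\partial_s P_{t-s}f = -\Delta P_{t-s}f$ to arrive at $-2\Gamma(P_{t-s}f,\Delta P_{t-s}f)$.

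The summability assertions carry the real content, and the main obstacle is that the familiar estimate $\|\Gamma_\ind(P_tf)\|_1 \le \|\Gamma_\ind(f)\|_1$ controls each $t$-slice but not the $t$-supremum, whereas we need to sum $\max_{t\in[0,T]}\Gamma_\ind(P_tf)$ against $m$. To convert $\sup_t$ into something amenable to Fubini, I would use the Duhamel identity $P_tf = f + \int_0^t \Delta P_sf\, ds$ (valid pointwise because $P_\bullet f$ is a $C^1$ trajectory in $D(L)$) together with $(a+b)^2 \le 2a^2+2b^2$ and the Cauchy-Schwarz inequality in time to obtain, pointwise on $X$ and for every $t \in [0, T]$,
\[
\Gamma_\ind(P_tf)\ \le\ 2\,\Gamma_\ind(f)\ +\ 2T\int_0^T \Gamma_\ind(\Delta P_sf)\, ds.
\]
Now $\Delta f \in C_c(X)$ by local finiteness, so $\Gamma_\ind(\Delta f)$ is finitely supported. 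Commuting $P_s$ with the spectral calculus of $L$ and using the $\ell^2$-contractivity of $P_s$ yields $\|\Gamma_\ind(\Delta P_sf)\|_1 \le \|\Gamma_\ind(\Delta f)\|_1$ uniformly for $s \in [0,T]$, and Fubini then places the right-hand side in $\ell^1(X,m)$.

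For the derivative bound, the Cauchy-Schwarz inequality for the bilinear form $\Gamma$ at each vertex gives $2|\Gamma(u,v)| \le \Gamma(u)+\Gamma(v)$, whence
\[
|\partial_t \Gamma(P_tf)|\ =\ 2\,|\Gamma(P_tf, \Delta P_tf)|\ \le\ \Gamma(P_tf) + \Gamma(P_t \Delta f).
\]
Taking the maximum over $t\in[0,T]$ and applying the previous step to both $f$ and $\Delta f$ (both in $C_c(X)$) yields $\max_{t\in[0,T]}|\partial_t\Gamma(P_tf)| \in \ell^1(X,m)$, completing the plan.
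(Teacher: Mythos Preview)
Your proof is correct and follows the same overall architecture as the paper: establish the derivative identity by pointwise differentiation (using local finiteness to justify term-by-term differentiation), then obtain the uniform summability by dominating $\max_{t\in[0,T]}\Gamma_\ind(P_tf)$ via a time-integral representation together with the contractivity estimate $\|\Gamma_\ind(P_s g)\|_1\le\|\Gamma_\ind(g)\|_1$, and finally handle $|\partial_t\Gamma(P_tf)|$ by Cauchy--Schwarz on $\Gamma$ applied to $f$ and $\Delta f$. The one genuine point of divergence is how the pointwise domination of $\Gamma_\ind(P_tf)$ is produced. The paper applies the fundamental theorem of calculus directly to $s\mapsto\Gamma_\ind(P_sf)$, obtaining
\[
\Gamma_\ind(P_tf)\ \le\ \Gamma_\ind(f)+\int_0^T\bigl(\Gamma_\ind(P_sf)+\Gamma_\ind(P_s\Delta f)\bigr)\,ds
\]
via Cauchy--Schwarz on the bilinear form $\Gamma_\ind$; the self-referential term $\Gamma_\ind(P_sf)$ on the right is then resolved only after passing to the $\ell^1$-norm. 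You instead apply Duhamel one level lower, to $P_tf$ itself, and combine $(a+b)^2\le 2a^2+2b^2$ with Cauchy--Schwarz in the time variable to get a pointwise bound with no self-reference. Both routes use exactly the same ingredients and yield the same conclusion; your version is marginally more direct, at the cost of an extra factor of $T$ in the constant.
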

	\begin{proof}
	To interchange the derivative $ \partial_{t} $ with $ \Gamma_\ind $ we employ the fundamental theorem of calculus and local finiteness. Then, the first statement follows by Leibniz' rule and since $ \partial_{s} P_{t-s}f=-\Delta P_{t-s}f $.
	
To show the second statement, we need a \emph{basic estimate}. 
	Since $ P_{t} =e^{-tL}$ is a contraction on $ \ell^{2}(X,m) $, we infer for $ k,l=0,1 $,
 $$   \|\Gamma_{k}(P_{t}\Delta^{l} f)\|_{1}=
 \|L^{k/2}e^{-tL}L^{l}f\|_{2}^{2}\leq \|e^{-tL}L^{k/2+l}f\|_{2}^{2} \leq\|L^{k/2+l}f\|_{2}^{2} =\|\Gm_{k}(\Delta^{l}f)\|_{1} .$$
	
 Notice that by the fundamental theorem of calculus we have for $ k=0,1 $, $ t\ge0 $ and $ f\in C_{c}(X) $
	\begin{align*}
	\Gamma_\ind(P_{t}f)-\Gamma_\ind(f)&=\int_{0}^{t}\partial_{s}\Gamma_\ind(P_{s}f)ds=2\int_{0}^{t}\Gamma_\ind(P_{s}f,\partial_{s}P_{s}f)ds\\
	&=2\int_{0}^{t}\Gamma_\ind(P_{s}f,\Delta P_{s}f)ds=2\int_{0}^{t}\Gamma_\ind(P_{s}f,P_{s}\Delta f)ds.
	\end{align*}
	
		By Cauchy-Schwarz, $2|\Gamma_\ind(f,g)| \leq 2\sqrt{\Gamma_\ind (f)} \sqrt{\Gamma_\ind (g)} \leq \Gamma_\ind (f) + \Gamma_\ind (g)$, and thus for $t \leq T$,	
		\begin{align*}
		\Gamma_\ind(P_{t}f) &\leq \Gamma_\ind(f) + \int_{0}^{T}( \Gamma_{\ind}(P_s f) + \Gamma_\ind (P_s \Delta f) ) ds .%=:h
		\end{align*}
		 Hence, taking the $  \|\cdot\|_{1} $ norm
				we get by the \emph{basic estimate} in the beginning,
		\begin{align*}
	\left	\|\max_{t\in [0,T]} \Gamma_\ind(P_{t}f)\right\|_1 &\leq \|\Gamma_\ind(f)\|_1  + \int_{0}^{T}\left(\| \Gamma_{\ind}(P_s f)\|_1  + \|\Gamma_\ind (P_s \Delta f)\|_1  \right) ds \\
		&\leq \|\Gamma_\ind(f)\|_1  + T(\|\Gamma_\ind(f)\|_1  +\|\Gamma_\ind(\Delta f))\|_1 =:C_{k}(f,T) <\infty.
		\end{align*}
		Thus, $  \max_{t\in [0,T]} \Gamma_\ind(P_t f)\in \ell_1(X,m)  $. Furthermore,
		by Cauchy-Schwarz and the equality in the statement shown above, we obtain $$ |\partial_t \Gamma_\ind (P_t f)| = 2|\Gamma_\ind(P_t f, \partial_t P_t f)| \leq 2\sqrt {\Gamma_\ind (P_t f)} \sqrt{\Gamma_\ind (P_t \Delta f)} \leq \Gamma_\ind (P_t f)+ \Gamma_\ind (P_t \Delta f). $$ 
		Hence,
		\begin{align*}
		\left\|\max_{t\in [0,T]} |\partial_t \Gamma_\ind (P_t f) | \right\|_1 \leq \left\|\max_{t\in [0,T]} \Gamma_\ind (P_t f) \right\|_1 + \left\|\max_{t\in [0,T]} \Gamma_\ind (P_t \Delta f) \right\|_1 \leq C^{(\ind)}(T,f) + C^{(\ind)}(T,\Delta f)
		\end{align*}
	implies	$  \max_{t\in [0,T]}|\partial_{t} \Gamma_\ind(P_t f)|\in \ell_1(X,m)  $.
	\end{proof}
\begin{rem}
	The proof of the lemma actually works for functions in $  D(L^{5/2})$.
\end{rem}	

\begin{proof}[Proof of Proposition~\ref{thm:HuaDerivative}]
	We recall that for a function $ f:X\times (a,b)\to\IR $ that is differentiable in the second variable and satisfies
	$f(\cdot,s)\in\ell^{1}(X,m)$  for all $ s\in (a,b) $ and $\sup_{s\in (a,b)} |\partial_{s}f(\cdot,s)|\in\ell^{1}(X,m)$, we have
	\begin{align*}
	\partial_{s}\sum_{y\in X}f(y,s)m(y)=	\sum_{y\in X}\partial_{s}f(y,s)m(y).
	\end{align*}
	We apply this to the function 
	\begin{align*}
	f_{k}(y,s)=(\Gamma_{\ind}(P_{t-s} f) P_{s}1_x) (y).
	\end{align*}

	Since $ \Gamma_{\ind}(P_{t-s} f)\in \ell^{1}(X,m) $ and $ P_{s}1_x\in \ell^{\infty}(X) $, we have $ \Gamma_{\ind}(P_{t-s} f)P_{s}1_x\in \ell^{1}(X,m) $  for all $ s\in (0,t) $. Moreover, 	we have to show that $\sup_{s\in (0,t)} |\partial_{s}f_{k}(\cdot,s)| \in \ell^{1}(X,m)$ for all $ t> 0 $. To this end we apply Leibniz rule 	
		\begin{align*}
		\partial_s f_{\ind}(y,s) = 
		   \left( (\partial_s \Gamma_{\ind}(P_{t-s} f)) P_{s}1_x
		  +\Gamma_{\ind}(P_{t-s} f) \Delta P_{s}1_x \right) (y).
		\end{align*}

As for the first part, $\sup_{s\in (0,t)}\partial_s \Gamma_{\ind}(P_{t-s} f) \in \ell^{1}(X,m)$ by the lemma above  and $0\leq  P_{s}1_{x}\leq 1 $ since $ P_{s} $ is a contraction on $ \ell^{\infty}(X) $.  

As for the second part, $\sup_{s\in (0,t)}\Gamma_{\ind}(P_{t-s} f)\in \ell^{1}(X,m)$  by the lemma above  and by local finiteness $ \Delta 1_{x}\in C_{c}(X)\subseteq \ell^{\infty}(X) $, i.e., $ |\Delta 1_{x}|\leq C  $ for some $ C\ge0 $, so, we have $|\Delta P_{t}1_{x}|=|P_t \Delta 1_{x}|\in\ell^{\infty}(X)\leq C $.

Therefore, we can exchange derivation and summation which yields
	\begin{align*}
	\partial_s \langle \Gamma_{\ind}(P_{t-s} f), P_{s} 1_x \rangle_{1,\infty}
	&= \partial_s \sum_{y\in X} f_{\ind}(y,s)m(y) = \sum_{y\in X} \partial_s f_{\ind}(y,s)m(y) \\
	&=\langle\partial_s \Gamma_{\ind}(P_{t-s} f), P_{s} 1_x \rangle_{1,\infty} 
	+\langle \Gamma_{\ind}(P_{t-s} f), \Delta P_{s} 1_x \rangle_{1,\infty}
	.
	\end{align*}
	This finishes the proof.
\end{proof}
	
	\section{Proof of the main theorem}\label{s:proof}
	
	The key ingredient to the proof is the following theorem which is a combination of the product role and Green's formula above.
	
	\begin{theorem}\label{thm:GammaDerivative}
	Let $ b $ be a locally finite graph over $ (X,m) $ that satisfies the ellipticity condition $ \mathrm{(EC)} $.	Let $\ind \in {0,1}$ and $x \in X$.
		Then, for $f \in C_c^+(X)$, 
		\[
		\partial_s \langle \Gamma_{\ind}(P_{t-s} f), P_{s} 1_x \rangle_{1,\infty} = 2\langle \Gm_{\ind+1}(P_{t-s} f), P_{s} 1_x \rangle_{\mathrm{abs}}.
		\]
		Moreover in other terms,
		\begin{align*}
		\partial_s  P_{s}\Gamma_{\ind}(P_{t-s} f)(x)=2 P_{s} \Gm_{\ind+1}(P_{t-s} f)(x).
		\end{align*}
	\end{theorem}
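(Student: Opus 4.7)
The plan is to differentiate under the sum using Proposition~\ref{thm:HuaDerivative}, then move the Laplacian off of $P_s 1_x$ using Green's formula (Proposition~\ref{p:GF}), and finally collapse the resulting expression via the recursion defining $\Gamma_{\ind+1}$.

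Concretely, Proposition~\ref{thm:HuaDerivative} gives
\begin{align*}
\partial_s\langle \Gamma_\ind(P_{t-s}f), P_s 1_x\rangle_{1,\infty} = \langle \partial_s\Gamma_\ind(P_{t-s}f), P_s 1_x\rangle_{1,\infty} + \langle\Gamma_\ind(P_{t-s}f), \Delta P_s 1_x\rangle_{1,\infty},
\end{align*}
and the first summand equals $-2\langle\Gamma_\ind(P_{t-s}f,\Delta P_{t-s}f), P_s 1_x\rangle_{1,\infty}$ by Lemma~\ref{l:UniformSummableGamma}. To the second summand I apply Green's formula with $\Gamma_\ind(P_{t-s}f)$ in the role of the first argument and $P_s 1_x$ in the role of the second. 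The hypotheses are in place: $\Gamma_\ind(P_{t-s}f)\in\ell^1(X,m)$ by Lemma~\ref{l:UniformSummableGamma}; $P_s 1_x\in\ell^1(X,m)$ since $1_x\in\ell^1(X,m)$ and $P_s$ contracts $\ell^1$; and $\Delta P_s 1_x = P_s\Delta 1_x\in\ell^\infty(X)$ because $\Delta 1_x\in C_c(X)$ by local finiteness and $P_s$ contracts $\ell^\infty$. Proposition~\ref{p:GF} therefore transforms the second summand into $\langle\Delta\Gamma_\ind(P_{t-s}f), P_s 1_x\rangle_{\mathrm{abs}}$.

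Adding the two contributions and invoking the algebraic identity $\Delta\Gamma_\ind(h) - 2\Gamma_\ind(h,\Delta h) = 2\Gamma_{\ind+1}(h)$ with $h = P_{t-s}f$, which is precisely the recursion defining $\Gamma_{\ind+1}$, yields the first displayed identity. The pointwise reformulation then follows from self-adjointness of $P_s$: the identity $\langle g, P_s 1_x\rangle_{\mathrm{abs}} = m(x)\,P_s g(x)$ holds whenever both sides are absolutely convergent (by Tonelli for nonnegative $g$, otherwise by splitting into positive and negative parts). Applying this to $g=\Gamma_\ind(P_{t-s}f)$ on the left-hand side, and to $g = 2\Gamma_{\ind+1}(P_{t-s}f) = \Delta\Gamma_\ind(P_{t-s}f) - 2\Gamma_\ind(P_{t-s}f,\Delta P_{t-s}f)$ on the right-hand side (whose absolute convergence against $P_s 1_x$ follows piecewise from Green's formula and Lemma~\ref{l:UniformSummableGamma}), and then dividing by $m(x)$, delivers the pointwise statement.

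I expect the only delicate aspect to be verifying the integrability prerequisites for the product rule and for Green's formula; these are all furnished by Lemma~\ref{l:UniformSummableGamma} together with the commutation $\Delta P_s 1_x = P_s\Delta 1_x$ coming from $1_x\in D(L)$. The algebraic heart of the proof is simply the carr\'e du champ recursion, which is what converts $\Gamma_\ind$ into $\Gamma_{\ind+1}$ and produces the factor of $2$.
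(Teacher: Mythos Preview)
Your proof is correct and follows essentially the same route as the paper: apply Proposition~\ref{thm:HuaDerivative} for the product rule, use Lemma~\ref{l:UniformSummableGamma} to identify $\partial_s\Gamma_\ind(P_{t-s}f)$, apply Green's formula (Proposition~\ref{p:GF}) to shift the Laplacian, and collapse via the recursion for $\Gamma_{\ind+1}$. Your verification of the hypotheses of Green's formula and your derivation of the pointwise statement match the paper's argument almost line for line.
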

	\begin{proof}
		By Proposition~\ref{thm:HuaDerivative} we have
			\begin{align*}
			\partial_s \langle \Gamma_{\ind}(P_{t-s} f), P_{s} 1_x \rangle_{1,\infty} =
			 \langle\partial_s \Gamma_{\ind}(P_{t-s} f), P_{s} 1_x \rangle_{1,\infty}
			 +
			\langle \Gamma_{\ind}(P_{t-s} f), \Delta P_{s} 1_x \rangle_{1,\infty}  .
			\end{align*}
			Since $\Gm_{\ind}(P_{t-s}f), P_{s}1_{x} \in\ell^{1}(X,m) $ and $ \Delta P_{s}1_{x} = P_{s}\Delta1_{x}\in \ell^{\infty}(X)  $ (as $ \Delta1_{x}\in C_{c}(X)\subseteq D(L) $ by local finiteness), we can apply Green's formula, Proposition~\ref{p:GF}, to obtain
			\begin{align*}
			\ldots&= \langle\partial_s \Gamma_{\ind}(P_{t-s} f), P_{s} 1_x \rangle_{1,\infty} +
			 \langle\Delta \Gamma_{\ind}(P_{t-s} f),  P_{s} 1_x \rangle_{\mathrm{abs}}\\
			&=\langle  \partial_s \Gamma_{\ind}(P_{t-s} f)+\Delta \Gamma_{\ind}(P_{t-s} f),  P_{s} 1_x \rangle_{\mathrm{abs}}.
			\end{align*}
			Now, by Lemma~\ref{l:UniformSummableGamma} we have
			\begin{align*}
			\ldots
		%&= 	\langle(2 \Gm_{\ind}(P_{s}f,\partial_sP_{s}f)-\Delta \Gamma_{\ind}(P_s f)),  P_{t-s} 1_x \rangle_{\mathrm{abs}}\\
			&=
			\langle - 2 \Gm_{\ind}(P_{t-s}f,\Delta P_{t-s}f) + \Delta \Gamma_{\ind}(P_{t-s} f),  P_{s} 1_x \rangle_{\mathrm{abs}}		
			\end{align*}			
			and by definition of $ \Gm_{\ind+1} $ we arrive at
			\begin{align*}
			\ldots
			&=2\langle \Gm_{\ind+1}(P_{t-s}f),  P_{s} 1_x \rangle_{\mathrm{abs}}.
			\end{align*}
			This finishes the proof of the first statement. The ``moreover'' statement follows directly since $\langle g, P_s 1_x \rangle_{\mathrm{abs}} = m(x)P_s g(x)$ whenever the dual pairing is absolutely summable.
	\end{proof}
	
	The proof of Theorem~\ref{thm:main} follows by similar arguments as in \cite{hua2017stochastic}. We prove the following auxiliary lemma first.
	
	\begin{lemma}\label{l:aux}
			Let $ b $ be a locally finite graph over $ (X,m) $ that satisfies
			 the ellipticity condition $ \mathrm{(EC)} $
			and $CD(-K,n)$ for some $ K $ and $ n $. Then, for $f \in C_c^+(X)$ and  $ 0\leq s\leq t $,
			\begin{align*}%\label{eq:Hprime}
			\partial_{s} ( e^{2Ks}P_s \Gamma (P_{t-s} f ) ) \geq \frac 2 n P_s (\Delta P_{t-s} f)^2 e^{2Ks}  \geq \frac 2 n (P_t \Delta f)^2 e^{2Ks}.
			\end{align*}
	\end{lemma}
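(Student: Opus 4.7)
The plan is a direct computation: differentiate the exponentially weighted quantity via the product rule, convert the time derivative into $\Gamma_2$ using Theorem~\ref{thm:GammaDerivative}, apply $CD(-K,n)$ pointwise to $P_{t-s}f$, and then use positivity and the sub-Markov property of $P_s$ to push the inequality through.

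Concretely, I would first note that by the ordinary product rule and Theorem~\ref{thm:GammaDerivative} applied with $k=1$, one has
\begin{align*}
\partial_s\bigl(e^{2Ks} P_s \Gamma(P_{t-s}f)\bigr)
&= 2K\, e^{2Ks} P_s \Gamma(P_{t-s}f) + e^{2Ks}\, \partial_s P_s \Gamma(P_{t-s}f)\\
&= 2 e^{2Ks} P_s\bigl(\Gamma_2(P_{t-s}f) + K\,\Gamma(P_{t-s}f)\bigr).
\end{align*}
Pointwise application of $CD(-K,n)$ to the function $P_{t-s}f$ gives
\[
\Gamma_2(P_{t-s}f) + K\,\Gamma(P_{t-s}f) \geq \tfrac{1}{n}(\Delta P_{t-s}f)^2,
\]
and since $P_s$ is positivity preserving, this yields the first inequality asserted in the lemma.

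For the second inequality, I would use that $\Delta$ commutes with the semigroup on functions in $D(L)$; since $f \in C_c^+(X) \subseteq D(L^k)$ for every $k$ by local finiteness, this gives $\Delta P_{t-s}f = P_{t-s}\Delta f$. Next, because $P_s$ is a sub-Markov operator (a positive contraction with $P_s 1 \leq 1$), the standard Cauchy--Schwarz / Jensen inequality yields $(P_s g)^2 \leq P_s(g^2)$ pointwise. Applying this with $g = P_{t-s}\Delta f$ and then invoking the semigroup law $P_s P_{t-s} = P_t$ gives
\[
P_s(\Delta P_{t-s}f)^2 = P_s\bigl((P_{t-s}\Delta f)^2\bigr) \geq (P_s P_{t-s}\Delta f)^2 = (P_t \Delta f)^2,
\]
which is the second inequality.

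The only nontrivial step is the identification $\partial_s P_s\Gamma(P_{t-s}f) = 2 P_s \Gamma_2(P_{t-s}f)$, and that has already been accomplished in Theorem~\ref{thm:GammaDerivative} using (EC) via Green's formula; once that is in hand, the remainder is a clean application of the curvature-dimension inequality together with the pointwise Jensen inequality for the sub-Markov semigroup. Everything else is elementary, so I do not anticipate a genuine obstacle.
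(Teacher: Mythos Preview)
Your proof is correct and follows essentially the same route as the paper: differentiate via the product rule and Theorem~\ref{thm:GammaDerivative}, apply $CD(-K,n)$ together with positivity of $P_s$ for the first inequality, and use the sub-Markov Cauchy--Schwarz/Jensen estimate $(P_s g)^2 \leq P_s(g^2)$ plus the semigroup law for the second. The paper phrases the last step slightly differently (inserting a factor $P_s 1 \leq 1$ before applying Cauchy--Schwarz), but this is the same argument.
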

	\begin{proof}
		By Theorem~\ref{thm:GammaDerivative}, by $CD(-K,n)$ and since the semigroup is positive, we obtain
		\begin{align*}%\label{eq:Hprime}
		\partial_{s} ( e^{2Ks}P_s \Gamma (P_{t-s} f ) )=	2e^{2Ks}(P_s \Gamma_2 (P_{t-s} f) + K P_s \Gamma (P_{t-s} f))  \geq \frac 2 n P_s (\Delta P_{t-s} f)^2 e^{2Ks} % \geq \frac 2 n (P_t \Delta f)^2 e^{2Ks}.
		\end{align*}
		Furthermore,
		by the $ P_{s}1\leq 1 $ and the Cauchy-Schwarz inequality, we estimate $$ P_s (\Delta P_{t-s} f)^2 \geq (P_{s}(\Delta P_{t-s} f)^2)\cdot (P_{s}1)
		\ge (P_{s}((\Delta P_{t-s} f)\cdot 1))^{2}=
		(\Delta P_t f)^2 .$$
		This finishes the proof.
	\end{proof}

\begin{proof}[Proof of Theorem~\ref{thm:main}]
Let $H(s) := e^{2Ks}P_s \Gamma( P_{t-s} f)$ for  $ 0\leq s\leq t $. 

We first prove (i) $ \Longrightarrow $ (ii). 
By the lemma above we have $H'(s) \geq \frac 2 n P_s (\Delta P_{t-s} f)^2 e^{2Ks}$ and 
integrating over $s$ yields
\[
e^{2Kt}P_t \Gamma (f) - \Gamma (P_t f) =   H(t)-H(0) \geq \frac 2 n \int_0^t  P_s (\Delta P_{t-s} f)^2 e^{2Ks}ds.
\]
Rearranging the inequality proves (ii).

We next prove (ii) $ \Longrightarrow $ (iii). By the second inequality in the lemma above, we have $ (\Delta P_{t-s} f)^2 e^{2Ks}  \geq \frac 2 n (P_t \Delta f)^2 e^{2Ks} $. Therefore,
\[
\int_0^t  P_s (\Delta P_{t-s} f)^2 e^{2Ks}ds \geq    (\Delta P_t f)^2 \int_0^t e^{2Ks} ds  = \frac {e^{2Kt}-1}{2K}    (\Delta P_t f)^2.
\]
Plugging this inequality into (ii)  proves (iii).

We next prove (i) $ \Longrightarrow $ (iv),(v).
Let $G(s):=P_s(P_{t-s} f)^2$ for  $ 0\leq s\leq t $. Then, by Theorem~\ref{thm:GammaDerivative},
\[
G'(s) = 2P_s \Gamma( P_{t-s} f) = 2e^{-2Ks} H(s).
\]
By the lemma above we have $	H'(r) \geq \frac 2 n (P_t \Delta f)^2 e^{2Ks} $, $ s\leq r\leq t $, and, thus, 
\[
H(t) - \frac 2 n (\Delta P_t f)^2 \int_s^t  e^{2Kr} dr  \geq  H(s) \geq H(0) + \frac 2 n (\Delta P_t f)^2 \int_0^s e^{2Kr} dr.
\]
Thus,
\[
2e^{-2Ks} H(t)  -  \frac 2 n (\Delta P_t f)^2 \frac{e^{2K(t-s)}-1}{K}   \geq  G'(s) \geq 2e^{-2Ks} H(0) +  \frac{2}{n} (\Delta P_t f)^2 \frac{1 - e^{-2Ks}}K.
\]
Integrating with respect to  $ s $ from $ 0 $ to $t$ yields
\begin{align*}
 \frac {1 - e^{-2Kt}}K H(t) - \frac{ e^{2Kt}-1 -2K}{nK^2}  (\Delta P_t f)^2   &\geq G(t)-G(0) \\&\geq  \frac {1 - e^{-2Kt}}K H(0) +    \frac{e^{-2Kt} - 1 + 2Kt}{nK^2}  (\Delta P_t f)^2.
\end{align*}
Plugging in $G(t)$, $G(0)$, $H(t)$ and $H(0)$ proves (iv) and (v) respectively.

The implication (iii) $ \Longrightarrow  $ (i) follows by taking the derivative with respect to $ t $ at $t=0$. Specifically, one subtracts $ \Gamma(f) $ on each side of the inequality, divides by $ t $ and lets $ t\to 0 $.

The implications (iv) $ \Longrightarrow  $  (i) and (v) $ \Longrightarrow  $  (i) follow by taking the derivative with respect to $ t $ twice at $t=0$. Specifically, consider the Taylor expansions, cf. \cite{KLMST16}, of the term on the left hand side and the first term on the right hand side at $ t=0 $, 
\begin{align*}
  P_{t}f^{2}-(P_{t}f)^{2} &=2t\Gamma(f)+t^{2}(\Delta\Gamma(f)+2\Gamma(f,\Delta f))	+o(t^{2}),\\
   \frac{e^{2Kt}-1}{K}P_{t}\Gamma(f)&= 2t\Gamma(f)+2t^{2}\left(\Delta\Gamma(f)+K\Gamma(f) \right)+o(t^{2}).
\end{align*}

Subtracting $2 t\Gamma(f)$ on both sides, dividing by $ t^{2} $ and taking the limit $ t\to 0 $ yields the implication (iv) $ \Longrightarrow  $  (i). To see the implication (v) $ \Longrightarrow  $  (i), consider the Taylor expansion of the first term of the right hand side of (v) instead, i.e.,
 \begin{align*}
  \frac{1-e^{-2Kt}}{K}\Gamma(P_{t}f)&= 2t\Gamma(f)+2t^{2}\left(2\Gamma(f,\Delta f)-K\Gamma(f) \right)+o(t^{2}).
 \end{align*}

 Now, again subtracting  $2 t\Gamma(f) $, dividing by $ t^{2} $ and taking the limit $ t\to 0 $ yields the implication (v) $ \Longrightarrow  $  (i).
\end{proof}

%	\subsection{Semigroup characterization}

	\section{Completeness, stochastic completeness and finite measure}\label{s:appl}
	In this section, we present two applications of the characterization above. This concerns the relationship between completeness and stochastic completeness under a curvature dimension inequality.
	
	To this end, recall that a graph is said to have the \emph{Feller property} if $$  P_{t} C_{0}(X) \subseteq  C_{0}(X),  $$ where $ C_{0}(X) $ is the closure of $ C_{c}(X) $ with respect to $ \|\cdot\|_{\infty} $.
	Furthermore, a graph is
	called \emph{stochastically complete} if
	\begin{align*}
	P_{t}1=1,
	\end{align*}
	where $ 1 $ denotes the constant function $ 1 .$ 
	Finally, we call a graph \emph{complete}
	if there is an increasing nonnegative sequence of functions $\eta_k \in C_c(V)$, $ k\in\N $, that converge pointwise $\eta_k \to 1$ and
	$$
	\Gamma(\eta_k) \to 0 ,\qquad k\to\infty.
	$$

In \cite{hua2017stochastic} it is shown that under a lower curvature bound, completeness implies stochastic completeness. Next, we  prove that the converse is true as well in case of non-negative curvature when the graph also satisfies the ellipticity condition $ \mathrm{(EC)} $ and  the Feller property.
	
	\begin{theorem} 	Let $ b $ be a locally finite graph over $ (X,m) $ that satisfies the ellipticity condition $ \mathrm{(EC)} $ and $CD(0,\infty)$.  If the graph  is stochastically complete and satisfies the Feller property, then  the graph 	is complete.
	\end{theorem}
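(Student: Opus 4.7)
The plan is to build the required cutoffs from heat-smoothed indicators, controlled via the variance form of the main theorem. Specialising Theorem~\ref{thm:main}(v) to $K=0$ and $n=\infty$ (the $1/n$ term vanishes) under $CD(0,\infty)$ and $\mathrm{(EC)}$ yields, for every $f\in C_{c}^{+}(X)$ and $t>0$,
\[
P_{t}f^{2}-(P_{t}f)^{2}\geq 2t\,\Gamma(P_{t}f).
\]
Inserting $f=\mathbf{1}_{B}$ for a finite set $B\subseteq X$ (so $f^{2}=f$) and using stochastic completeness $P_{t}1=1$, which forces $0\leq P_{t}\mathbf{1}_{B}\leq 1$, gives the uniform estimate
\[
\Gamma(P_{t}\mathbf{1}_{B})\leq \frac{1}{2t}\,P_{t}\mathbf{1}_{B}(1-P_{t}\mathbf{1}_{B})\leq \frac{1}{8t}.
\]

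I will then pick an exhaustion $(B_{j})$ of $X$ by finite sets and sequences $t_{j}\to\infty$, $\delta_{j}\downarrow 0$. The Feller property places $P_{t_{j}}\mathbf{1}_{B_{j}}$ in $C_{0}(X)$, so the truncation $v_{j}:=(P_{t_{j}}\mathbf{1}_{B_{j}}-\delta_{j})_{+}$ is finitely supported, nonnegative, and bounded above by $1-\delta_{j}<1$. Because $s\mapsto (s-\delta_{j})_{+}$ is $1$-Lipschitz, $\Gamma(v_{j})\leq \Gamma(P_{t_{j}}\mathbf{1}_{B_{j}})\leq 1/(8t_{j})$, so $\|\Gamma(v_{j})\|_{\infty}\to 0$. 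Pointwise convergence $v_{j}(x)\to 1$ at every $x$ will come from $P_{t}\mathbf{1}_{B_{j}}(x)\nearrow P_{t}1(x)=1$ (monotone convergence plus stochastic completeness) combined with a standard diagonal choice of $B_{j}$ enlarged sufficiently relative to $t_{j}$.

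To upgrade $(v_{j})$ to a monotone sequence I pass to a subsequence. Set $\eta_{1}:=v_{1}$, and having constructed $\eta_{k}=v_{n(k)}$ with $\eta_{k}\leq 1-\delta_{n(k)}<1$ on its finite support, pick $n(k+1)>n(k)$ large enough that $v_{n(k+1)}(x)\geq \eta_{k}(x)$ for every $x\in\supp\eta_{k}$; this is achievable because $v_{j}(x)\to 1>\eta_{k}(x)$ at each of the finitely many such $x$. Off $\supp\eta_{k}$ the inequality is trivial since $\eta_{k}=0$ there, so $\eta_{k+1}:=v_{n(k+1)}\geq \eta_{k}$ on all of $X$. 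The resulting $(\eta_{k})$ is then an increasing sequence in $C_{c}(X)$ with $\eta_{k}\to 1$ pointwise and $\|\Gamma(\eta_{k})\|_{\infty}\to 0$, establishing completeness.

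The hard step will be reconciling the three tuning parameters: $t_{j}\to\infty$ is needed to drive $\|\Gamma(v_{j})\|_{\infty}$ to zero, $B_{j}$ must exhaust fast enough (as a function of $t_{j}$) to counteract the spreading of the heat semigroup and keep $P_{t_{j}}\mathbf{1}_{B_{j}}$ pointwise close to $1$, and $\delta_{j}$ must decay without destroying the compactness of support granted by the Feller vanishing-at-infinity. Once these schedules are compatible, the monotonicity of $(\eta_{k})$ follows for free via the subsequence trick above, because each $v_{j}$ stays strictly below $1$ on its finite support while $v_{j}\to 1$ pointwise.
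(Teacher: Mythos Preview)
Your proof is correct and follows essentially the same approach as the paper: both apply Theorem~\ref{thm:main}(v) with $K=0$, $n=\infty$ to indicators of finite sets to obtain $\Gamma(P_t\mathbf{1}_B)\leq 1/(8t)$, then use the Feller property to produce compactly supported cutoffs and stochastic completeness to make them approach $1$. The only cosmetic difference is that the paper uses the truncation $\eta:=1\wedge(2P_t\mathbf{1}_U-\tfrac12)_+$, which equals $1$ exactly on the prescribed finite set and so yields an increasing sequence immediately, whereas your $(P_t\mathbf{1}_B-\delta)_+$ stays strictly below $1$ and requires your (valid) subsequence trick to arrange monotonicity.
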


	\begin{proof}
	To prove completeness, we show that for every finite $S\subseteq X$ and $\eps >0$ there is  $\eta =\eta_\eps^S \in C_c(V)$ such that $\eta=1$ on $S$ and $\Gamma (\eta) \leq \eps$.
		Let  $t=2/\eps$. Due to stochastic completeness, we can choose $U \subseteq X$ finite such that $P_t 1_U  \geq 3/4$ on $S$. 
		Due to the  Feller property we can choose  $W \subseteq X$ finite such that $P_t 1_U  \leq 1/4$ for $X\setminus W$.
		Let
		\begin{align*}
		\eta := 1 \wedge (2P_t 1_U - 1/2)_+
		\end{align*}
		where $u_+ = u \vee 0$.
		Then, $\supp \eta \subset W$ and thus, $\eta \in C_c(X)$ and $\eta = 1$ on $S$. 	By Theorem~\ref{thm:main}~(v) the assumption  $CD(0,\infty)$ implies
		\begin{align*}
		2t\Gamma (P_t f) \leq P_t f^2 - (P_t f)^2
		\end{align*}
		for all compactly supported $f$. Thus, we estimate using the definition of $ \eta $, the inequality just mentioned and $ P_{t}1_{U}^2\leq 1 $
		$$\Gamma (\eta) \leq \Gamma(2P_t 1_U) = 4 \Gamma(P_t 1_U)\leq \frac{2}{t}(P_{t}1_{U}^{2}-(P_{t}1_{U})^{2}) \leq \frac 2 {t} = \eps.$$
		This finishes the proof.
	\end{proof}

		\begin{theorem} 	Let $ b $ be a locally finite, connected graph over $ (X,m) $ that satisfies the ellipticity condition $ \mathrm{(EC)} $ and  $CD(K,\infty)$ for some $K>0$. Furthermore, assume that the graphs is complete, then the graphs has finite measure.
	\end{theorem}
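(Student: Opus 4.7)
The plan is to argue by contradiction: assume $m(X)=\infty$ and derive a contradiction via a spectral gap argument combining Theorem~\ref{thm:main} with the completeness hypothesis.

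First, I would invoke stochastic completeness $P_t 1 = 1$, which under completeness and a lower Bakry-Emery bound is the content of the result in \cite{hua2017stochastic} cited earlier in the paper. Then I would apply Theorem~\ref{thm:main}~(v) with $n=\infty$ and with $K$ there replaced by $-K$ (so that the hypothesis reads $CD(K,\infty)$ with the $K>0$ of the theorem) to $f \in C_c^+(X)$, and integrate over $(X,m)$, using $\int P_t f^2\,dm = \int f^2\,dm$ (by stochastic completeness and self-adjointness) to obtain
\begin{equation*}
\|f\|_2^2 - \|P_t f\|_2^2 \;\geq\; \frac{e^{2Kt}-1}{K}\,\|L^{1/2}P_t f\|_2^2.
\end{equation*}
Passing to the spectral representation of $L$ on $\ell^2(X,m)$ and letting $t\to\infty$ (as in the implications (iii)$\Rightarrow$(i) and (v)$\Rightarrow$(i) of the main theorem, where the integrand blows up on $(0,K)$) forces the spectral measure of every $f\in C_c^+(X)$ to vanish on $(0,K)$, and by density of $C_c^+(X)$ in $\ell^2(X,m)$ one gets $\sigma(L)\subseteq\{0\}\cup[K,\infty)$.

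Next, I would identify $\ker L \cap \ell^2(X,m)$: any $h$ there satisfies $\|\Gamma(h)\|_1 = \|L^{1/2}h\|_2^2 = 0$, hence $\Gamma(h)\equiv 0$, and by the connectedness of the graph $h$ must be constant. Under the assumption $m(X)=\infty$, no nonzero constant lies in $\ell^2(X,m)$, so $\ker L\cap\ell^2=\{0\}$. Combined with the previous step this yields $L\geq K>0$ on all of $\ell^2(X,m)$, giving the Poincar\'e-type inequality
\begin{equation*}
\|f\|_2^2 \;\leq\; \frac{1}{K}\,\|\Gamma(f)\|_1, \qquad f\in C_c(X).
\end{equation*}

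Finally, applying this Poincar\'e inequality to the completeness sequence $\eta_k$ gives $\|\eta_k\|_2^2 \leq K^{-1}\|\Gamma(\eta_k)\|_1$. By the monotone convergence theorem (since $0\leq \eta_k^2\uparrow 1$), the left-hand side tends to $m(X)=\infty$, which together with a uniform bound on the right-hand side would yield the desired contradiction. The main obstacle is producing this bound: the bare definition of completeness controls $\Gamma(\eta_k)$ only in a weak (e.g.\ $\ell^\infty$) mode, while the Poincar\'e argument requires $\|\Gamma(\eta_k)\|_1$ bounded. The natural way to close the gap is to smooth the completeness sequence by the semigroup, replacing $\eta_k$ with $P_s\eta_k$ (suitably truncated so as to re-enter $C_c(X)$) and invoking the gradient estimate (ii) in the form $\|\Gamma(P_s\eta_k)\|_1 \leq e^{-2Ks}\|\Gamma(\eta_k)\|_1$ together with stochastic completeness to preserve the pointwise convergence to $1$. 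This produces a refined sequence with controlled Dirichlet energy, closing the contradiction and thus proving $m(X)<\infty$.
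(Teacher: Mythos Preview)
Your spectral-gap derivation is correct and elegant: integrating (v), using mass conservation under stochastic completeness, and letting $t\to\infty$ does force $\sigma(L)\subseteq\{0\}\cup[K,\infty)$, and under $m(X)=\infty$ and connectedness the kernel is trivial, yielding the Poincar\'e inequality $K\|f\|_2^2\le\|\Gamma(f)\|_1$. The problem is precisely where you locate it, and your proposed fix does not close it.

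Completeness only gives $\Gamma(\eta_k)\to 0$ in an $\ell^\infty$ (or pointwise) sense; nothing prevents $\|\Gamma(\eta_k)\|_1\to\infty$. Smoothing by the semigroup does not help: for fixed $s$ the inequality $\|\Gamma(P_s\eta_k)\|_1\le e^{-2Ks}\|\Gamma(\eta_k)\|_1$ just multiplies an unbounded sequence by a constant, and if you let $s=s_k\to\infty$ you destroy the other side of the contradiction, since the very spectral gap you established gives $\|P_{s_k}\eta_k\|_2\le e^{-Ks_k}\|\eta_k\|_2$, so you cannot simultaneously make the Dirichlet energy small and keep the $\ell^2$-norm large. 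Truncating $P_s\eta_k$ back to $C_c(X)$ would require decay at infinity (a Feller-type property) which is not assumed here. So the global $\ell^1$-route appears genuinely blocked.

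The paper circumvents this by working \emph{pointwise} at a single vertex $x$, so that only the $\ell^\infty$-smallness of $\Gamma(\eta)$ is needed. One uses the elementary estimate $(\Delta g(x))^2\le 2\Deg(x)\,\Gamma(g)(x)$ together with the pointwise gradient bound (ii) to obtain
\[
|\partial_t P_t\eta(x)|^2 \;\le\; 2\Deg(x)\,\Gamma(P_t\eta)(x)\;\le\;2\Deg(x)\,e^{-2Kt}\,P_t(\Gamma\eta)(x)\;\le\;2\Deg(x)\,e^{-2Kt}\,\eps,
\]
then integrates in $t$ to get $\eta(x)-\lim_{t\to\infty}P_t\eta(x)\le \sqrt{2\eps\Deg(x)}/K$. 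Finally, $m(X)=\infty$ with connectedness forces $P_t\eta(x)\to 0$ (this is where the spectral theorem enters, at a single point rather than globally), so $1\le \sqrt{2\eps\Deg(x)}/K$, a contradiction for small $\eps$. The key conceptual difference is that the paper never integrates over $X$ and therefore never needs $\ell^1$-control of $\Gamma(\eta_k)$.
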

We want to point out that the same result with slightly different assumptions has been independently established  in the context of proving Buser's inequality \cite{liu2018Buser}.	
	
	\begin{proof}
		Let $x \in X$ and $ \eps>0 $.
		Due to completeness, there exists $\eta \in C_c(X)$ such that $\eta(x)=1$ and $\Gamma (\eta) < \eps$.
		Observe that by Jensen's inequality we have $$ [\Delta g (x)]^2\leq\left(\frac 1{m(x)}\sum_{y\sim x}b(x,y)|g(x)-g(y)|\right)^{2}\leq  2 \Deg(x) \Gamma (g) (x) .$$
				Therefore, by Theorem~\ref{thm:main}~(ii),
		\begin{align*}
		(\partial_t P_t \eta)^2(x) \leq 2 \Deg(x) \Gamma( P_t \eta)(x) \leq  2 \Deg(x) e^{-2Kt} P_t (\Gamma \eta) (x) \leq 2 \Deg(x) e^{-2Kt} \eps.
		\end{align*}
		Taking square root and integrating  over time yields by the fundamental theorem of calculus
		\begin{align*}\label{eq:etaIntegrate}
		\eta(x) - \lim_{t\to\infty}P_t \eta(x) \leq \frac{\sqrt{2\eps\Deg(x)}}K.
		\end{align*}
			Now, suppose $m(X) = \infty$.
		Due to connectedness, by  the spectral theorem we obtain
		$$ \lim_{t\to\infty}P_t \eta(x) =0.$$ 
		Since additionally $\eta(x)=1$, the two (in)equalities above yield
		\[
		1 \leq \frac{\sqrt{2\eps\Deg(x)}}K.
		\]
		This is a contradiction for $\eps$  small enough. Hence, the assumption  $m(X) = \infty$ is wrong which finishes the proof.
	\end{proof}
	
	Combining the above theorems yields the following corollary for \emph{combinatorial graphs}, i.e., graphs where $ b $ takes the values $ 0 $ or $ 1 $ and $ m\equiv 1 $.
	\begin{corollary}
		Let $ b $ be a connected combinatorial graph and $ m\equiv 1 $ satisfying $CD(K,\infty)$ for some $K>0$ which  is stochastically complete.
		Then the graph is finite.
	\end{corollary}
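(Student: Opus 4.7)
The plan is to combine the two theorems of Section~\ref{s:appl}. For a combinatorial graph with $m\equiv 1$, the ellipticity condition $\mathrm{(EC)}$ holds trivially with $C=1$, since $b(x,y)\in\{0,1\}$ and $m(x)m(y)=1$; the bound $CD(K,\infty)$ with $K>0$ implies $CD(0,\infty)$; and stochastic completeness and connectedness are part of the hypothesis. The only nontrivial hypothesis is the Feller property needed to invoke the first theorem of Section~\ref{s:appl}.

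I would obtain Feller by first showing that $CD(K,\infty)$ with $K>0$ forces the vertex degree on a combinatorial graph to be globally bounded. The idea is a direct local $\Gamma_2$-computation: testing $\Gamma_2\geq K\Gamma$ with locally supported functions such as $1_x$ or $1_y$ for $y\sim x$ and unpacking $2\Gamma_2(f)=\Delta\Gamma(f)-2\Gamma(f,\Delta f)$ yields pointwise polynomial inequalities in the vertex degrees that become inconsistent with $K>0$ once $\Deg$ is too large. A uniform bound on $\Deg$ makes $\Delta$ bounded on $\ell^\infty(X)$, whence $P_t=e^{-tL}$ is given by a norm-convergent exponential series whose partial sums preserve $C_c(X)$ by local finiteness; taking the uniform limit then maps $C_c(X)$ into $C_0(X)$, and Feller follows by density and boundedness of $P_t$.

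With completeness now available from the first theorem of Section~\ref{s:appl}, I would apply the second theorem of Section~\ref{s:appl}, whose remaining hypotheses $\mathrm{(EC)}$, connectedness and $CD(K,\infty)$ with $K>0$ are all in place, to conclude $m(X)<\infty$. Since $m\equiv 1$ this reads $|X|=m(X)<\infty$, so the graph is finite. The main obstacle is the passage from $CD(K,\infty)$ with $K>0$ to bounded degree: one must find test functions whose $\Gamma_2$ calculation genuinely produces a uniform pointwise upper bound on $\Deg$, rather than merely an inequality that constrains neighboring degrees against one another.
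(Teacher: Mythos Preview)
Your strategy---check $\mathrm{(EC)}$, combine the two theorems of Section~\ref{s:appl}, and observe that $m\equiv 1$ turns finite measure into finiteness of $X$---is exactly what the paper intends: the paper gives no proof beyond the line ``Combining the above theorems yields the following corollary.'' In particular, the paper does not explain how the Feller property needed for the first theorem is obtained, so the gap you isolate is real and is left open by the paper as well.

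Your proposed remedy, deducing bounded degree (hence bounded Laplacian, hence Feller) from $CD(K,\infty)$ with $K>0$, is not obviously workable. Testing with $f=1_y$ for a neighbour $y\sim x$ and evaluating $\Gamma_2(f)\geq K\Gamma(f)$ at $x$ produces only a relative bound of the shape $\Deg(x)\leq 3\Deg(y)+c(x,y)$, linking the degree of $x$ to that of a neighbour and to the local triangle count; iterating such inequalities along a path does not by itself rule out unbounded degree. So the obstacle you flag at the end is genuine, and closing it would require either a sharper choice of test function or an independent argument for the Feller property---neither of which the paper supplies.
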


	Surprisingly, it is still unclear if the stochastic completeness assumption is necessary.

\section{Examples}

In this section, we provide examples for graphs satisfying gradient estimates by Theorem~\ref{thm:main}, which cannot be proven with the methods from \cite{lin2015equivalent} or \cite{Gong2017} or \cite{hua2017stochastic}.
Particularly, we will construct graphs with a lower curvature bound satisfying (EC), but having unbounded vertex degree and a degenerate vertex measure, meaning $\inf_x m(x) = 0$.
We will do this by glueing together a graph with bounded vertex degree and degenerate measure, with a graph with unbounded vertex degree and non-degenerate vertex measure.

We first construct a graph with degenerate measure, bounded vertex degree, and  satisfying (EC).
\begin{example}\label{ex:degenerateMeasure}
Let $G=(\N,b,m)$ with $b(n-1,n)=b(n,n-1) = 1/n$, and $b(n,m)=0$ if $|m-n|>1$, and
\[
m(n) = \begin{cases}
1&:n \mbox{ even,}\\
1/n&: n \mbox{ odd.}
\end{cases}
\]
It is easy to check that the vertex degree is bounded, and that (EC) is satisfied, and that the vertex measure is degenerate. Due to bounded vertex degree, the graph has to satisfy $CD(K,\infty)$ for some $K \in \R$ by \cite[Theorem~1.3]{lin2010ricci}.
\end{example}
	
We next construct an example with non-negative curvature and unbounded vertex degree.	
	
\begin{example} \label{ex:unbounded}
It is shown in
\cite{cushing2018curvature} that the following antitree satisfies $CD(0,\infty)$. The antitree $G=(V,b,m)$ is given by $V=\bigcup V_k$ with $|V_k|=k$ and $k \in \N$ and $V_i \cap V_j = \emptyset$ whenever $i \neq j$.
Two vertices $x \in V_i$ and $y \in V_j$ are adjacent iff $|i-j| \leq 1$. In this case, we set $b(x,y)=1$, otherwise $b(x,y)=0$.
The vertex measure $m$ is set to be constant one.
It is clear that (EC) is satisfied and that the vertex degree is unbounded.
\end{example}	
	
We finally construct examples by glueing togehter the graphs explained above.

\begin{example}
Let $G_1 = (V_1,b_1,m_1)$ be the graph from Example~\ref{ex:degenerateMeasure} and $G_2=(V_2,b_2,m_2)$ be the graph from Example~\ref{ex:unbounded}. We suppose that  $V_1 \cap V_2 = \emptyset$.
We construct $G=(V_1 \cup V_2, b,m)$ with $m(v) = m_i(v)$ if $v \in V_i$, and
$b(x,y)=b_i(x,y)$ if $x,y \in V_i$ for some $i \in \{1,2\}$, and $b(x,y) = 0$ if $x \in V_1$ and $y \in V_2$ except for finitely many exceptions where $b(x,y)$ can be positive.

Then, $G$ has a lower curvature bound, satisfies (EC), has unbounded vertex degree and degenerate measure.
\end{example}

\section*{Acknowledgments}
M.K. wants to thank the DFG for financial support. F.M. wants to thank the MPI MiS Leipzig for financial support. Both authors want to thank Melchior Wirth for valuable discussions.

\bibliographystyle{alpha}	
\bibliography{Bibliography}{}

	Matthias Keller,\\
	Department of Mathematics,
	University of Potsdam, Potsdam, Germany\\
	\texttt{matthias.keller@uni-potsdam.de}\\

	Florentin Münch, \\
	MPI MiS Leipzig, Leipzig, Germany\\
	\texttt{florentin.muench@mis.mpg.de}\\
	
	\end{document}